\theoremstyle{definition}\newtheorem*{claim}{Claim}
\newtheorem*{rmk}{Remark}
\theoremstyle{definition}
\theoremstyle{definition}
\theoremstyle{definition}
\theoremstyle{definition}
\theoremstyle{definition}
\numberwithin{theorem}{section}
\newcommand{\bR}{\mathbb{R}}
\newcommand{\bZ}{\mathbb{Z}}
\newcommand{\bT}{\mathbb{T}}
\newcommand\pa[1]{\left[#1\right]}
\newcommand\on[1]{\operatorname{#1}} 
\newcommand\smallmat[1]{\pa{\begin{smallmatrix}#1\end{smallmatrix}}}
\newcommand{\onto}{\xymatrix{\ar@{>>}[r]&}}
\newcommand{\da}[4]{\xymatrix{#1 \ar@<.5ex>[r]^{#2} \ar@<-.5ex>[r]_{#3} & #4}}
\newif\ifdraft\drafttrue
\theoremstyle{definition}
\title{A badly expanding set on the $2$-torus}
\date{\today}
\author{Rene R\"uhr}
\begin{document}
\maketitle
\begin{abstract}
We give a counterexample to a conjecture stated in \cite{liniallondon} regarding expansion on $\bT^2$ under $\smallmat{1&1\\ 0&1}$ and $\smallmat{1&0\\ 1&1}$.
\end{abstract}

Let $\Sigma_{\on{D}}$ be the set containing the linear transformation $\sigma_1=\smallmat{1&1\\ 0&1}$ and its transpose $\sigma_2=\smallmat{1&0\\ 1&1}$, and let $\Sigma_{\on{U}}=\Sigma_{\on{D}}\cup \Sigma_{\on{D}}^{-1}$, adding the inverses of $\sigma_1$ and $\sigma_2$. Using these transformations, Linial and London \cite{liniallondon} studied an infinite $4$-regular expander graph, showing the following expansion property: For any bounded measurable set $A\subset \bR^2$ of the plane, one has
\begin{equation}
\label{bounds}
m\left(A\cup\bigcup_{\sigma\in \Sigma_{\on{U}}}\sigma(A)\right)\geq 2m\left(A\right)
\text{ and }
m\left(A\cup\bigcup_{\sigma\in \Sigma_{\on{D}}}\sigma(A)\right)\geq \frac{4}{3}m\left(A\right)
\end{equation}
where $m$ denotes the Lebesgue measure of a set and the bounds are sharp. 
Note that $\Sigma_{\on{U}}\subset \on{SL}_2(\bZ)$ and thus its elements also act on $\bT^2=\bR^2/\bZ^2$, and this action is measure preserving with respect to the induced probability measure $m_{\bT^2}$ on $\bT^2$.
It was conjectured in \cite{liniallondon} and in \cite{expandersurvey}[Conjecture~4.5] that there is a constant $c>0$ such that for $A\subset \bT^2=\bR^2/\bZ^2$ with $m_{\bT^2}(A)\leq c$  the estimate of line~(\ref{bounds}) with $m_{\bT^2}$ in place of $m$ holds.
Below we give a simple counterexample to this conjecture.
Let $\pi:\bR^2\to\bT^2$ denote the natural projection map. Let $\varepsilon>0$ and define
\[C_U=\pi(\{(x,y)\in\bR^2: |x|\leq\varepsilon \text{ or } |y|\leq\varepsilon\}).\]
and
\[
C_D=C_U\cup \pi(\{(x,y)\in\bR^2: |x+y|\leq \varepsilon\}).
\]
These sets are of arbitrary small measure as $\varepsilon\to 0$ and satisfy
\begin{claim}
$m_{\bT^2}\left(C_U\cup\bigcup_{\sigma\in \Sigma_{\on{U}}}\sigma(C_U)\right)<2m_{\bT^2}(C_U)$
and
$m_{\bT^2}\left(C_D\cup\bigcup_{\sigma\in \Sigma_{\on{U}}}\sigma(C_D)\right)<\frac43m_{\bT^2}(C_U)$.
\end{claim}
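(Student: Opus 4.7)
The plan is to compute both sides of each inequality through order $\varepsilon^2$ by inclusion--exclusion and compare.

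I first identify the strips produced by each orbit. Each $\sigma\in\Sigma_{\on{U}}$ acts linearly, sending a primitive strip $\pi(\{|ax+by|\le\varepsilon\})$ to another primitive strip via the $\GL_2(\bZ)$-action on the normal $(a,b)$. A short calculation shows that the $\Sigma_{\on{U}}$-orbit of $C_U$ is the union of the four strips
\[
S_H=\pi(\{|y|\le\varepsilon\}),\ S_V=\pi(\{|x|\le\varepsilon\}),\ S_+=\pi(\{|x-y|\le\varepsilon\}),\ S_-=\pi(\{|x+y|\le\varepsilon\}),
\]
while the $\Sigma_{\on{U}}$-orbit of $C_D$ adds the two extra strips $\pi(\{|x+2y|\le\varepsilon\})$ and $\pi(\{|2x+y|\le\varepsilon\})$, for six in total. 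Two measure inputs then drive the computation: each primitive strip has $m_{\bT^2}$-measure $2\varepsilon$, since $\bT^2\to\bT,\ (x,y)\mapsto ax+by$, is Haar-preserving; and any two strips with linearly independent normals intersect in a set of measure exactly $4\varepsilon^2$, by applying the same argument to $\bT^2\to\bT^2,\ (x,y)\mapsto(a_1x+b_1y,\,a_2x+b_2y)$. Triple and higher intersections contribute only $O(\varepsilon^2)$.

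For the first inequality, inclusion--exclusion on these four strips gives
\[
m_{\bT^2}\!\left(C_U\cup\bigcup_{\sigma\in\Sigma_{\on{U}}}\sigma(C_U)\right)=8\varepsilon-24\varepsilon^2+T(\varepsilon),
\]
where $T(\varepsilon)$ is the sum of the four triple intersections minus the quadruple. A direct planar evaluation near the origin gives triples $S_H\cap S_V\cap S_\pm$ of area $3\varepsilon^2$ each, triples $S_H\cap S_+\cap S_-$ and $S_V\cap S_+\cap S_-$ of area $2\varepsilon^2$ each, and a quadruple of area $2\varepsilon^2$; hence $T(\varepsilon)=8\varepsilon^2$ and the LHS equals $8\varepsilon-16\varepsilon^2$. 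Since $2\,m_{\bT^2}(C_U)=8\varepsilon-8\varepsilon^2$, the first inequality holds strictly for every small $\varepsilon>0$, with a gap of order $\varepsilon^2$.

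The second inequality is the main obstacle. The same scheme applied to the six-strip union has leading term $6\cdot 2\varepsilon=12\varepsilon$, while $\tfrac{4}{3}m_{\bT^2}(C_U)=\tfrac{16}{3}\varepsilon-\tfrac{16}{3}\varepsilon^2$; since $12>\tfrac{16}{3}$, the leading coefficients already contradict the inequality for small $\varepsilon>0$, so it cannot hold as written. Comparison with \eqref{bounds} suggests that the intended statement replaces $\Sigma_{\on{U}}$ by $\Sigma_{\on{D}}$ and the right-hand $C_U$ by $C_D$; in that variant the $\Sigma_{\on{D}}$-orbit of $C_D$ collapses to the same four-strip union, and one compares $8\varepsilon-16\varepsilon^2$ with $\tfrac{4}{3}m_{\bT^2}(C_D)=\tfrac{4}{3}(6\varepsilon-9\varepsilon^2)=8\varepsilon-12\varepsilon^2$ to conclude the required strict inequality by exactly the same argument.
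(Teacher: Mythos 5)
Your proof is correct and, at bottom, follows the same strategy as the paper: identify the $\Sigma$-orbit of the set as a union of finitely many primitive strips and compute the measure of that union to order $\varepsilon^2$. The difference is one of execution: the paper reads the overlaps off a picture (central square plus corner triangles), while you organize the computation as a clean inclusion--exclusion, with the two measure inputs (each primitive strip has measure $2\varepsilon$; two strips with independent primitive normals meet in measure $4\varepsilon^2$) justified by Haar-preserving homomorphisms $\bT^2\to\bT$ and $\bT^2\to\bT^2$. This buys you precision: your totals $8\varepsilon-16\varepsilon^2$ for the four-strip union and $6\varepsilon-9\varepsilon^2$ for $m_{\bT^2}(C_D)$ are the correct ones (I have checked your triple and quadruple intersections), whereas the paper's picture count gives $8\varepsilon-14\varepsilon^2$ and $6\varepsilon-10\varepsilon^2$; the discrepancies are $O(\varepsilon^2)$ and harmless for the conclusion, but your numbers are the ones I would trust. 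You are also right that the second inequality is false as literally printed: with $\Sigma_{\on{U}}$ acting on $C_D$ the orbit contains the six strips you list (measure $\approx 12\varepsilon$), and in any case $m_{\bT^2}(C_D)\approx 6\varepsilon$ already exceeds $\tfrac43 m_{\bT^2}(C_U)\approx \tfrac{16}{3}\varepsilon$, so both $\Sigma_{\on{U}}$ and the right-hand $C_U$ must be typos. The paper's own proof confirms this reading --- it explicitly works with $\Sigma_{\on{D}}$ and compares the new mass against $\tfrac13 m_{\bT^2}(C_D)$ --- and your corrected computation $8\varepsilon-16\varepsilon^2<\tfrac43(6\varepsilon-9\varepsilon^2)=8\varepsilon-12\varepsilon^2$ establishes exactly the intended statement. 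In short: same idea, more careful bookkeeping, plus a legitimate correction to the statement.
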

\begin{proof}
The following picture depicts the set $C_U$ in red and the image under $\Sigma_{\on{U}}$ in blue. 
We note that the overlapping triangles outside the square are to be seen modulo $1$, thus wrap up and do not amount to additional mass.

\medskip 
\begin{center}
\begin{tikzpicture}

  \filldraw[fill=red, draw=none] (0,1.4) rectangle (3,1.6);
  \filldraw[fill=red, draw=none] (1.4,0) rectangle (1.6,3);

 \coordinate (a) at (3,2.9);
\coordinate (b) at (0,-0.1);
\coordinate (c) at (0,0.1);
\coordinate (d) at (3,3.1);
\draw[draw=blue,opacity=0,fill=blue, fill opacity=0.5] (a) -- (b) -- (c) -- (d) -- cycle;

  \coordinate (a) at (-0.1,0);
\coordinate (b) at (0.1,0);
\coordinate (c) at (3.1,3);
\coordinate (d) at (2.9,3);
\draw[draw=blue,opacity=0,fill=blue, fill opacity=0.5] (a) -- (b) -- (c) -- (d) -- cycle;

  \coordinate (a) at (0,3.1);
\coordinate (b) at (3,0.1);
\coordinate (c) at (3,-0.1);
\coordinate (d) at (0,2.9);
\draw[draw=blue,opacity=0,fill=blue, fill opacity=0.5] (a) -- (b) -- (c) -- (d) -- cycle;

 \coordinate (a) at (-0.1,3);
\coordinate (b) at (0.1,3);
\coordinate (c) at (3.1,0);
\coordinate (d) at (2.9,0);
\draw[draw=blue,opacity=0,fill=blue, fill opacity=0.5] (a) -- (b) -- (c) -- (d) -- cycle;

\coordinate (a) at (0,0);
\coordinate (b) at (0,3);
\coordinate (c) at (3,3);
\coordinate (d) at (3,0);
\draw[black,thin] (a) -- (b) -- (c) -- (d) -- cycle;

  \filldraw (3,1.5) circle (0pt) node[align=left,   right] {\tiny\(C_U\)};
\end{tikzpicture}
\end{center}

The area of $C_U$ is $4\varepsilon-(2\varepsilon)^2$. The set $\bigcup_{\sigma\in \Sigma_{\on{U}}}\sigma(C_U)$ overlaps $C_U$ in the square of area $(2\varepsilon)^2$ and the $8$ adjacent triangles each of area $\varepsilon^2/2$. At the corners of the torus, we have additional $4$ overlapping triangles of the same size (note that we only have to remove half of the $8$ drawn triangles to match the fact that we are subtracting from the area of $2$ $2\varepsilon$-thick (sheared) rectangles, and not $4$). This amounts to a total area not contained in $C_U$ of $4\varepsilon-(2\varepsilon)^2-4\varepsilon^2-2\varepsilon^2=4\varepsilon-10\varepsilon^2<4\varepsilon-4\varepsilon^2$.

In the following picture of $C_D$, we did not draw the image of $\pi(\{|x+y|\leq \varepsilon\})$ as it is mapped to $C_U$ under $\Sigma_{\on{D}}$.

\begin{center}
\begin{tikzpicture}

  \filldraw[fill=red, draw=none] (0,1.4) rectangle (3,1.6);
  \filldraw[fill=red, draw=none] (1.4,0) rectangle (1.6,3);

\coordinate (a) at (0,3);
\coordinate (b) at (0.1,3);
\coordinate (c) at (3,0.1);
\coordinate (d) at (3,0);
\coordinate (f) at (2.9,0);
\coordinate (g) at (0,2.9);
\filldraw[fill=red, draw=none] (a) -- (b) -- (c) -- (d) -- (f) -- (g) -- cycle;

 \coordinate (a) at (3,2.9);
\coordinate (b) at (0,-0.1);
\coordinate (c) at (0,0.1);
\coordinate (d) at (3,3.1);
\draw[draw=blue,opacity=0,fill=blue, fill opacity=0.5] (a) -- (b) -- (c) -- (d) -- cycle;

  \coordinate (a) at (-0.1,0);
\coordinate (b) at (0.1,0);
\coordinate (c) at (3.1,3);
\coordinate (d) at (2.9,3);
\draw[draw=blue,opacity=0,fill=blue, fill opacity=0.5] (a) -- (b) -- (c) -- (d) -- cycle;

\coordinate (a) at (0,0);
\coordinate (b) at (0,3);
\coordinate (c) at (3,3);
\coordinate (d) at (3,0);
\draw[black,thin] (a) -- (b) -- (c) -- (d) -- cycle;

  \filldraw (3,1.5) circle (0pt) node[align=left,   right] {\tiny\(C_D\)};

\end{tikzpicture}
\end{center}

We see that $m_{\bT^2}(C_D)=6\varepsilon-10\varepsilon^2$, as the diagonal overlaps of area $(2\varepsilon)^2+\varepsilon^2$ with $C_U$ and has two triangles each of area $\varepsilon^2/2$ cut off at the corners of the torus.

The image consists of the single other diagonal and accounts for $2\varepsilon-((2\varepsilon)^2+\varepsilon^2)-\varepsilon^2=2\varepsilon-6\varepsilon^2$ mass, which is less than $\frac13m_{\bT^2}(C_D)=2\varepsilon-\frac{10}{3}\varepsilon^2$.

\end{proof}

\begin{rmk}
We note that it is possible to shift $C_U$ by $(1/2,1/2)$ to have a set with support bounded away from $(0,0)+\bZ^2$ and still badly expanding.
\end{rmk}

\medskip 
\begin{center}
\begin{tikzpicture}

\filldraw[fill=red, draw=none] (0,0) rectangle (3,0.1);
\filldraw[fill=red, draw=none] (0,2.9) rectangle (3,3);
\filldraw[fill=red, draw=none] (2.9,0) rectangle (3,3);
\filldraw[fill=red, draw=none] (0,0) rectangle (0.1,3);

\coordinate (a) at (3,2.9);
\coordinate (b) at (0,-0.1);
\coordinate (c) at (0,0.1);
\coordinate (d) at (3,3.1);
\draw[draw=blue,opacity=0,fill=blue, fill opacity=0.5] (a) -- (b) -- (c) -- (d) -- cycle;

\coordinate (a) at (-0.1,0);
\coordinate (b) at (0.1,0);
\coordinate (c) at (3.1,3);
\coordinate (d) at (2.9,3);
\draw[draw=blue,opacity=0,fill=blue, fill opacity=0.5] (a) -- (b) -- (c) -- (d) -- cycle;

  \coordinate (a) at (0,3.1);
\coordinate (b) at (3,0.1);
\coordinate (c) at (3,-0.1);
\coordinate (d) at (0,2.9);
\draw[draw=blue,opacity=0,fill=blue, fill opacity=0.5] (a) -- (b) -- (c) -- (d) -- cycle;

 \coordinate (a) at (-0.1,3);
\coordinate (b) at (0.1,3);
\coordinate (c) at (3.1,0);
\coordinate (d) at (2.9,0);
\draw[draw=blue,opacity=0,fill=blue, fill opacity=0.5] (a) -- (b) -- (c) -- (d) -- cycle;

\coordinate (a) at (0,0);
\coordinate (b) at (0,3);
\coordinate (c) at (3,3);
\coordinate (d) at (3,0);
\draw[black,thin] (a) -- (b) -- (c) -- (d) -- cycle;

  \filldraw (3,1.5) circle (0pt) node[align=left,   right] {\tiny\(C_U+(1/2,1/2)\)};
\end{tikzpicture}
\end{center}

\begin{rmk}
The expanding properties on $\bR^2$ and $\bT^2$ in \cite{liniallondon} are considered to be the continuous analogue of the expander family $\bZ^2/(p\bZ)^2-\{(0,0)\}$ with adjacency relations given by $\Sigma_U$ for $p$ prime. These are indeed expanders by Selberg's $3/16$-theorem on the spectral gap of congruence subgroups in $\on{SL}_2(\bZ)$ (see e.g.\ the first bullet point after \cite{expandersurvey}[Proposition~11.17]). 
One can of course formulate the analogous question of line (\ref{bounds}), and ask whether for a sufficently small subset $A$ of $\bZ^2/(p\bZ)^2-\{(0,0)\}$ one has $\left|A\cup\bigcup_{\sigma\in \Sigma_{\on{U}}}\sigma(A)\right|\geq 2|A|$ (and similarly in the case of the directed graph using relations $\Sigma_D$). This translates to the vertex isoperimetric parameter (see \cite{expandersurvey}[Section~4.6]). The set $(\bZ\times\{0\}\cup\{0\}\times\bZ)/(p\bZ)^2-\{(0,0)\}$ satisfies the bound with equality (and we have the analogous statement for the directed graph).
\end{rmk}
\bibliographystyle{alpha}

\begin{thebibliography}{HLW06}

\bibitem[HLW06]{expandersurvey}
Shlomo Hoory, Nathan Linial, and Avi Wigderson.
\newblock Expander graphs and their applications.
\newblock {\em Bull. Amer. Math. Soc. (N.S.)}, 43(4):439--561 (electronic),
  2006.

\bibitem[LL06]{liniallondon}
Nathan Linial and Eran London.
\newblock On the expansion rate of {M}argulis expanders.
\newblock {\em J. Combin. Theory Ser. B}, 96(3):436--442, 2006.

\end{thebibliography}

\end{document}